\title{Some properties of coefficients of cyclotomic polynomials}
\author{Marcin Mazur}
\author{Bogdan V.~Petrenko}
\address{
Department of Mathematics \\
Binghamton University \\
P.O. Box 6000 \\
Binghamton, NY 13892-6000, USA } \email{
mazur@math.binghamton.edu}
\address{
Department of Mathematics and Computer Science\\ Eastern Illinois University \\ 
600 Lincoln Avenue \\ Charleston, IL 61920-3099, USA
 }
\email{ bvpetrenko@eiu.edu}
\newtheorem{theorem}{Theorem}[section]
\newtheorem{lemma}[theorem]{Lemma}
\newtheorem{corollary}[theorem]{Corollary}
\newtheorem{conjecture}[theorem]{Conjecture}
\def\d{\displaystyle}
\newtheorem*{te*}{Theorem}
\begin{document}
\maketitle

\begin{abstract} 

This paper investigates coefficients
of cyclotomic polynomials theoretically and experimentally.
We prove the following result. {{\em If $n=p_1\ldots p_k$ where $p_i$ are odd primes and
	$p_1<p_2<\ldots<p_r<p_1+p_2<p_{r+1}<\ldots<p_t$ with $t\geq 3$ odd,
	then the numbers $-(r-2),-(r-3),\ldots, r-2, r-1$ are all coefficients
	of the cyclotomic polynomial $\Phi_{2n}$. Furthermore, if $1+p_r<p_1+p_2$ then $1-r$ is also a coefficient of $\Phi_{2n}$.} In the experimental part, in two instances we present computational evidence for asymptotic symmetry between distribution of positive and negative coefficients, and state the resulting conjectures.}

\vspace{4mm} \noindent {\bf Mathematics Subject Classification
(2010).} Primary: 11C08. Secondary: 11B05, 11B83.  

\vspace{3mm} \noindent {\bf Keywords.} Cyclotomic polynomials, coefficients, distribution, symmetry. 
\end{abstract}

\section{Introduction}
Cyclotomic polynomials $\Phi_n(x)$ can be defined by induction as 
follows: $\Phi_1(x)=x-1$, and subsequently
$\Phi_n(x)$ is the quotient of $x^n-1$ by the product of $\Phi_d(x)$
taken over all $d<n$ that divide $n$.
The polynomial $\Phi_n(x)$ is the minimal polynomial over $\mathbb Q$ of a primitive $n$th root of 1.

There has has been a considerable research on coefficients of 
cyclotomic polynomials. The first cyclotomic polynomial with a {\sffamily{nontrivial}} coefficient (a coefficient different from 0 and $\pm 1$) 
is $\Phi_{105}(x)$, a fact established in \cite{mig}. Many leading mathematicians have subsequently studied 
the coefficients of $\Phi_n(x)$. In 1936, Emma Lehmer \cite{lehmer} included the proof  
by Issai Schur that there exist cyclotomic polynomials with coefficients 
arbitrarily large in absolute value. In 1946, Paul Erd\"{o}s \cite{erdos} showed that there 
is $c>0$ such that for infinitely many $n$ the absolute value of the largest 
coefficient of $\Phi_n(x)$ is at least
$\d \exp \left\{ c (\log n)^{4/3} \right\}$. In 1949, Paul Bateman \cite{bateman} proved that there 
is $d>0$ such that for infinitely many $n$, the absolute value of the largest 
coefficient is at most $\d  < \exp \left\{ n^{d/ \log \log n} \right\}$. 
Surprisingly, only in 1987 Jiro Suzuki \cite{suzuki} (by improving the argument of 
Schur) showed that any integer is a coefficient of some cyclotomic polynomial.

Despite the above results, the family of cyclotomic polynomials having a given 
integer as a coefficient is 
mysterious for the most part. In particular, we do not seem to 
know anything about the value of the smallest degree of polynomials
in this family. The present paper grew out of our attempts to gain some insight into 
this problem. In Section 2, Theorem \ref{main}, we prove the following result which strengthens the 
aforementioned result of Suzuki.

\vspace{3mm}
\noindent
{\bf Theorem.}
{\em If $n=p_1\ldots p_k$ where $p_i$ are odd primes and
$p_1<p_2<\ldots<p_r<p_1+p_2<p_{r+1}<\ldots<p_t$ with $t\geq 3$ odd,
then the numbers $-(r-2),-(r-3),\ldots, r-2, r-1$ are all coefficients
of $\Phi_{2n}$. Furthermore, if $1+p_r<p_1+p_2$ then $1-r$ is also a coefficient of $\Phi_{2n}$.}

\vspace{3mm}

Section 3 contains some numeric data about the distribution of nontrivial coefficients of cyclotomic polynomials. We have 
observed some puzzling symmetry between the distribution of positive and 
negative coefficients. The meaning of this symmetry should become clear once 
the reader sees the graphs included in Section 3 as well as Conjectures \ref{conj1} and \ref{conj2}. At present we do not have any 
mathematical framework to explain what the pictures obviously suggest. The 
data were obtained by Brett Haines with the help of Wolfram
Mathematica \cite{wolf} and William Tyler Reynolds with the help of SAGE \cite{sage} as part of their 
independent studies with Bogdan 
Petrenko when they were undergraduate students at EIU.

\section{Proof of Theorem}

By $\Phi_n=\Phi_n(x)$ we denote the $n$-th cyclotomic polynomial. 
Let $\xi_n=e^{2\pi i/n}$, so $\xi_n$ is a primitive $n$-th root of $1$.
We define $\Pi_n$ to be the set of all primitive $n$-th roots of $1$, i.e.
\[\Pi_n=\{\xi_n^a:1\leq a\leq n \ \text{and}\ \gcd(a,n)=1\}\]
Let $S_k(n)=\sum_{u\in\Pi_n} u^{k}$.  Furthermore, let $\sigma_k(n)$ be the coefficient
of $\Phi_n$ at $x^{\phi(n)-k}$, where $\phi$ is the Euler function (so $\phi(n)$ is the degree
of $\Phi_n$). Thus $(-1)^k\sigma_k(n)$ is the value of the $k$-th elementary
symmetric function in $\phi(n)$ variables evaluated at the primitive $n$-th roots of $1$. 

\begin{lemma}
$S_1(n)=\mu(n)$, where $\mu$ is the M\"obius function.
\end{lemma}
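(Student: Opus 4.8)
The plan is to reduce the claim to M\"obius inversion over the divisors of $n$. First I would use the fact that every $n$-th root of unity has a well-defined multiplicative order dividing $n$, and is a \emph{primitive} $d$-th root of unity for that unique $d\mid n$; hence the set of all $n$-th roots of $1$ is the disjoint union $\bigcup_{d\mid n}\Pi_d$. Summing the identity function over this partition gives
\[
\sum_{d\mid n} S_1(d)=\sum_{j=0}^{n-1}\xi_n^{\,j}.
\]
The right-hand side is a finite geometric sum equal to $(\xi_n^{\,n}-1)/(\xi_n-1)=0$ when $n>1$ (since $\xi_n^{\,n}=1$ but $\xi_n\neq 1$), and equal to $1$ when $n=1$. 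Thus $\sum_{d\mid n}S_1(d)=[n=1]$, the indicator of $n=1$.

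Next I would apply M\"obius inversion to this relation: from $\sum_{d\mid n}S_1(d)=[n=1]$ one obtains $S_1(n)=\sum_{d\mid n}\mu(n/d)[d=1]=\mu(n)$, which is exactly the assertion of the lemma.

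I do not expect a genuine obstacle here; the only points requiring a word of care are the standard partition of the $n$-th roots of unity according to exact order and the elementary evaluation of $\sum_{j=0}^{n-1}\xi_n^{\,j}$. For completeness I would mention an alternative route that avoids M\"obius inversion entirely and foreshadows the multiplicativity arguments used later: the function $n\mapsto S_1(n)$ is multiplicative, because for $\gcd(m,n)=1$ the map $(u,v)\mapsto uv$ is a bijection $\Pi_m\times\Pi_n\to\Pi_{mn}$, so $S_1(mn)=S_1(m)S_1(n)$; and on prime powers one computes directly that $S_1(p)=-1$ (the primitive $p$-th roots are all $p$-th roots of unity except $1$, and the full sum is $0$) and $S_1(p^a)=0$ for $a\geq 2$ (those roots split into complete orbits under multiplication by $p$-th roots of unity, each orbit summing to $0$). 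Since $\mu$ is the multiplicative function taking these same values on prime powers, the two functions coincide.
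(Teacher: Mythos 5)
Your proof is correct and follows essentially the same route as the paper: partition the $n$-th roots of unity into the sets $\Pi_d$ for $d\mid n$, evaluate the geometric sum to get $\sum_{d\mid n}S_1(d)=[n=1]$, and apply M\"obius inversion. The alternative multiplicativity argument you sketch is a fine extra, but the core argument coincides with the paper's.
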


\begin{proof}
We have 
\[\sum_{d|n}S_1(d)=\sum_{k=1}^n \xi_n^k=\begin{cases} 1& \text{if $n=1$}\\ 0 & \text{if $n>1$}
\end{cases}.\]
The result now follows by the M\"obius inversion formula.
\end{proof}

\begin{lemma}\label{sum}
$\displaystyle S_k(n)=\frac{\phi(n)}{\phi(n/\gcd(k,n))}\mu(n/\gcd(k,n))$.
\end{lemma}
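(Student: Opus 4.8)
The plan is to reduce everything to the preceding lemma (that $S_1(n)=\mu(n)$) by analyzing the power map $\psi\colon\Pi_n\to\mathbb{C}$, $u\mapsto u^k$, and showing that its image is exactly $\Pi_m$, where $m=n/\gcd(k,n)$, with all fibres of equal size $\phi(n)/\phi(m)$.

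First I would set $g=\gcd(k,n)$ and $m=n/g$, and note $\xi_n^{g}=\xi_m$. Writing a typical element of $\Pi_n$ as $\xi_n^{a}$ with $\gcd(a,n)=1$, we get $(\xi_n^{a})^k=\xi_n^{ak}=\xi_m^{a(k/g)}$. Since $\gcd(k/g,m)=1$ and $\gcd(a,m)=1$ (because $m\mid n$), the exponent $a(k/g)$ is coprime to $m$, so $\psi$ maps $\Pi_n$ into $\Pi_m$. Under the standard identifications $\Pi_n\cong(\mathbb{Z}/n\mathbb{Z})^{\times}$ and $\Pi_m\cong(\mathbb{Z}/m\mathbb{Z})^{\times}$, the map $\psi$ becomes $a\mapsto (k/g)\,a\bmod m$, i.e. the composite of the reduction homomorphism $(\mathbb{Z}/n\mathbb{Z})^{\times}\to(\mathbb{Z}/m\mathbb{Z})^{\times}$ with multiplication by the unit $k/g$ of $\mathbb{Z}/m\mathbb{Z}$.

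Next I would invoke the fact that reduction modulo $m$ is a surjective group homomorphism $(\mathbb{Z}/n\mathbb{Z})^{\times}\to(\mathbb{Z}/m\mathbb{Z})^{\times}$ whenever $m\mid n$ — given a residue coprime to $m$, the Chinese Remainder Theorem lets one adjust it at the primes dividing $n$ but not $m$ so as to make it coprime to $n$. Hence $\psi$ is a surjection of $\Pi_n$ onto $\Pi_m$ whose fibres, being cosets of a fixed subgroup, all have cardinality $\phi(n)/\phi(m)$. Therefore
\[
S_k(n)=\sum_{u\in\Pi_n}u^k=\frac{\phi(n)}{\phi(m)}\sum_{v\in\Pi_m}v=\frac{\phi(n)}{\phi(m)}\,S_1(m),
\]
and applying the preceding lemma, $S_1(m)=\mu(m)$, gives the asserted formula.

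The only step needing real care is the surjectivity of reduction on unit groups, from which the uniformity of fibre sizes follows; the rest is bookkeeping with $\gcd$'s. A slightly more computational alternative avoids this entirely: summing over divisors gives $\sum_{d\mid n}S_k(d)=\sum_{j=0}^{n-1}\xi_n^{jk}$, which is $n$ when $n\mid k$ and $0$ otherwise, so Möbius inversion yields $S_k(n)=\sum_{d\mid\gcd(k,n)}\mu(n/d)\,d$; one then checks that this (multiplicative in $n$) expression equals $\phi(n)\mu(m)/\phi(m)$ by evaluating both sides on prime powers. I would present the first route as the main proof.
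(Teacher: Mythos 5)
Your proof is correct and follows essentially the same route as the paper: show that the $k$-th power map is a surjection of $\Pi_n$ onto $\Pi_{n/\gcd(k,n)}$ with all fibres of size $\phi(n)/\phi(n/\gcd(k,n))$, then apply the previous lemma $S_1=\mu$. The only difference is in the bookkeeping: you justify surjectivity and equal fibre sizes via the reduction homomorphism $(\mathbb{Z}/n\mathbb{Z})^{\times}\to(\mathbb{Z}/m\mathbb{Z})^{\times}$ and CRT (in fact supplying a detail the paper merely asserts), whereas the paper proves equal fibre sizes by an explicit bijection $u\mapsto u^a$ between fibres.
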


\begin{proof} If $\xi$ is a primitive $n$-th root of $1$ then $\xi^k$ is a primitive
$n/\gcd(n,k)$-th root of $1$. The map $\lambda: u\mapsto u^k$ is a surjective map
from $\Pi_n$ onto $\Pi_{n/\gcd(n,k)}$. We claim that the preimage of every element in 
$\Pi_{n/\gcd(n,k)}$ has the same number of elements. Indeed, if $w_1,w_2$ are
in $\Pi_{n/\gcd(n,k)}$, then $w_2=w_1^a$ for some integer $a$ which can be chosen relatively
prime to $n$. The map $u\mapsto u^a$ gives a bijection between $\lambda^{-1}(w_1)$ and
$\lambda^{-1}(w_2)$. Since $\Pi_m$ has $\phi(m)$ elements for every positive integer $m$,
all 
preimages 
of $\lambda$ have $n_k = \frac{\phi(n)}{\phi(n/\gcd(k,n))}$ elements. Thus
\[S_k(n)=n_k \sum_{w\in \Pi_{n/\gcd(n,k)}}w =
n_k \, \mu(n/\gcd(k,n)).
\]
\end{proof}
From now on we will assume that $n$ is an odd square-free integer, so
$n=p_1p_2\ldots p_t$, where $p_1<p_2<\ldots<p_t$ are odd prime numbers.
In this case, the numbers $\gcd(n,k)$ and $n/\gcd(n,k)$ are relatively prime for any integer $k$.
Using the fact that both $\phi$ and $\mu$ are multiplicative, we have the following

\begin{corollary}
Let $n=p_1p_2\ldots p_t$ be a square-free odd integer. Then
\[S_k(n)=(-1)^t\phi(\gcd(n,k)) \mu(\gcd(k,n)).
\]
\end{corollary}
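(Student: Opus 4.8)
The plan is to deduce this directly from Lemma~\ref{sum} by exploiting squarefreeness. Write $d=\gcd(n,k)$, so that $n=d\cdot(n/d)$, and since $n=p_1p_2\ldots p_t$ is squarefree, the factors $d$ and $n/d$ are coprime: each prime $p_i$ divides exactly one of them. This coprimality is the single structural fact that makes everything collapse, because it lets us apply multiplicativity of $\phi$ and $\mu$ to the factorization $n=d\cdot(n/d)$.

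First I would handle the $\phi$ part. By Lemma~\ref{sum}, $S_k(n)=\dfrac{\phi(n)}{\phi(n/d)}\,\mu(n/d)$. Since $\phi$ is multiplicative and $\gcd(d,n/d)=1$, we have $\phi(n)=\phi(d)\phi(n/d)$, hence $\phi(n)/\phi(n/d)=\phi(d)=\phi(\gcd(n,k))$. Next I would handle the $\mu$ part: multiplicativity of $\mu$ gives $\mu(n)=\mu(d)\mu(n/d)$, and $\mu(n)=(-1)^t$ because $n$ is a product of $t$ distinct primes. Since $d$ is also squarefree, $\mu(d)\in\{\pm1\}$ and so $\mu(d)^2=1$; multiplying the identity $\mu(d)\mu(n/d)=(-1)^t$ by $\mu(d)$ yields $\mu(n/d)=(-1)^t\mu(d)=(-1)^t\mu(\gcd(n,k))$.

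Combining the two computations gives
\[
S_k(n)=\frac{\phi(n)}{\phi(n/d)}\,\mu(n/d)=\phi(d)\cdot(-1)^t\mu(d)=(-1)^t\,\phi(\gcd(n,k))\,\mu(\gcd(k,n)),
\]
which is the claimed formula. There is no real obstacle here; the only thing to be careful about is the sign bookkeeping in the step $\mu(n/d)=(-1)^t\mu(d)$, which is exactly where squarefreeness (ensuring $\mu(d)^2=1$ and the clean split of the prime factors between $d$ and $n/d$) is used. Everything else is a routine application of the multiplicativity already invoked in the paragraph preceding the statement.
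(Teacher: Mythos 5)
Your proposal is correct and follows the same route the paper intends: apply Lemma~\ref{sum}, use the coprimality of $\gcd(n,k)$ and $n/\gcd(n,k)$ coming from squarefreeness, and invoke multiplicativity of $\phi$ and $\mu$ together with $\mu(n)=(-1)^t$ and $\mu(d)^2=1$. The paper merely states this reasoning in the sentence preceding the corollary, so your write-up is just a fuller version of the same argument.
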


For the rest of this 
section 
we fix an odd square-free integer $n=p_1p_2\ldots p_t$, where $p_1<p_2<\ldots<p_t$. We assume that $t$ is odd. We write $S_k$ for $S_k(n)$, $\sigma_k$ for
$\sigma_k(n)$. Our key tool will be the following well-known Newton identities:
\[ \sigma_1=-S_1, \ k\sigma_k=-(\sigma_{k-1}S_1+\sigma_{k-2}S_2+\ldots+\sigma_1 S_{k-1}+S_k).\]
(see \cite{gould} for a nice exposition).
From Lemma \ref{sum} we get the following 

\begin{lemma}
If $k<p_1p_2$ then 
\[ S_k=\begin{cases} -1 & \text{if $p_i\nmid k$ for $i=1,\ldots, t$}\\ p_i-1 & \text{if $p_i|k$}.
\end{cases}
\]
\end{lemma}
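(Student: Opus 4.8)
The plan is to read off the result directly from the Corollary, which asserts $S_k = S_k(n) = (-1)^t\,\phi(\gcd(n,k))\,\mu(\gcd(k,n))$. Since we have fixed $t$ odd for the remainder of the section, $(-1)^t = -1$, so the whole computation reduces to determining $\gcd(n,k)$ under the hypothesis $k < p_1p_2$.

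The key observation is that $k < p_1p_2$ forces at most one of the primes $p_1,\ldots,p_t$ to divide $k$: if two distinct primes $p_i$ and $p_j$ both divided $k$, then $p_ip_j \mid k$, whence $k \ge p_ip_j \ge p_1p_2$, contradicting the assumption. With this in hand I would split into the two stated cases. If no $p_i$ divides $k$, then $\gcd(n,k)=1$, so $\phi(\gcd(n,k)) = \mu(\gcd(n,k)) = 1$, and the Corollary gives $S_k = -1$. If $p_i \mid k$ for the unique such index $i$, then since $n$ is squarefree and no other prime factor of $n$ divides $k$, we get $\gcd(n,k) = p_i$; hence $\phi(p_i) = p_i - 1$ and $\mu(p_i) = -1$, and the Corollary yields $S_k = (-1)\,(p_i-1)\,(-1) = p_i - 1$.

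There is essentially no obstacle here; the only point requiring care is the "at most one prime divides $k$" step, which is precisely where the bound $k < p_1p_2$ enters. Without it, $\gcd(n,k)$ could be a product of several of the $p_i$ and both $\phi$ and $\mu$ of that product would have to be tracked, giving a messier formula; the hypothesis is exactly what collapses $\gcd(n,k)$ to be either $1$ or a single prime.
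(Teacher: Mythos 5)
Your proof is correct and matches the paper's intended argument: the paper states this lemma as an immediate consequence of Lemma \ref{sum} (via the Corollary $S_k=(-1)^t\phi(\gcd(n,k))\mu(\gcd(k,n))$ with $t$ odd), and your observation that $k<p_1p_2$ forces $\gcd(n,k)$ to be $1$ or a single prime $p_i$ is exactly the step the paper leaves implicit.
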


As a first consequence of the Newton's formulas we have

\begin{lemma}
$\sigma_k=1$ for $1\leq k<p_1$.
\end{lemma}
\begin{proof}
As $S_k=-1$ for $k<p_1$, we have $k\sigma_k=\sigma_{k-1}+\ldots+\sigma_1+1$, so the result
follows by straightforward induction.
\end{proof}

\begin{lemma}
$\sigma_k=0$ for $p_1\leq k<p_2$.
\end{lemma}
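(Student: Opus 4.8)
The plan is to use the Newton identities again, exactly as in the previous lemma, but now keeping track of the extra $S_k$ values that become nontrivial once $k$ can be a multiple of $p_1$. In the range $p_1\le k<p_2$ the only prime among $p_1,\dots,p_t$ that can divide $k$ is $p_1$ (since $2p_1>p_2$ would fail in general, but a multiple of $p_2$ or larger is already $\ge p_2$, and a multiple of $p_1$ that is $\ge p_2$ is allowed only when it is exactly a multiple of $p_1$ less than $p_2$; in any case $p_i\nmid k$ for $i\ge 2$). Hence by the preceding lemma $S_k=-1$ unless $p_1\mid k$, in which case $S_k=p_1-1$. Also recall from the lemma before that $\sigma_j=1$ for $1\le j<p_1$, and we are trying to show $\sigma_j=0$ for $p_1\le j<p_2$, so an induction on $k$ is natural.

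First I would set up the induction: fix $k$ with $p_1\le k<p_2$ and assume $\sigma_j=0$ for all $j$ with $p_1\le j<k$ (the base case $k=p_1$ has an empty such range). Writing Newton's identity $k\sigma_k=-(\sigma_{k-1}S_1+\cdots+\sigma_1 S_{k-1}+S_k)$, I would split the sum according to whether the $\sigma$-index lies below $p_1$ (where $\sigma_j=1$) or in $[p_1,k-1]$ (where $\sigma_j=0$ by the inductive hypothesis); the latter terms vanish. So $k\sigma_k=-\big(\sum_{j=1}^{p_1-1}S_{k-j}\big)-S_k$, i.e. $k\sigma_k=-\sum_{i=k-p_1+1}^{k}S_i$, a sum of exactly $p_1$ consecutive values $S_i$ with indices running through a window of length $p_1$ ending at $k$.

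The key observation is then that any block of $p_1$ consecutive integers contains exactly one multiple of $p_1$, so among $S_{k-p_1+1},\dots,S_k$ exactly one equals $p_1-1$ and the other $p_1-1$ of them equal $-1$; hence the sum is $(p_1-1)+(p_1-1)(-1)=0$, giving $k\sigma_k=0$ and therefore $\sigma_k=0$ since $k\neq 0$. I would just need to double-check that every index $i$ in the window $[k-p_1+1,k]$ still satisfies $i<p_2$ so that the simple formula for $S_i$ applies, and that $i\ge 1$; since $k<p_2$ the first is immediate, and $k-p_1+1\ge 1$ because $k\ge p_1$. The only mild subtlety — and the thing I'd be most careful about — is the bookkeeping at the boundary $j=p_1$ in the Newton sum and making sure the ranges $[1,p_1-1]$ and $[p_1,k-1]$ exactly partition $[1,k-1]$, together with the separate $S_k$ term; once that is pinned down the computation is the one-line cancellation above. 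This is not really an obstacle, just a place to be precise.
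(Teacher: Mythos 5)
Your proof is correct and follows essentially the same route as the paper: the same Newton-identity decomposition (using $\sigma_j=1$ for $j<p_1$ and induction to annihilate the terms with $\sigma$-index in $[p_1,k-1]$), reducing everything to the window sum $S_{k-p_1+1}+\cdots+S_k$. The only (harmless) difference is that you evaluate this window sum directly by counting the single multiple of $p_1$ in a block of $p_1$ consecutive indices all below $p_2$, whereas the paper computes it only at $k=p_1$ and then shows by telescoping that it stays equal to $0$ throughout the range.
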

\begin{proof}
For $k\geq p_1$ we have 
\[k\sigma_{k}=-(\sigma_{k-1}S_1+\sigma_{k-2}S_2+\ldots+\sigma_{p_1} S_{k-p_1}+S_{k-p_1+1}+\ldots
+S_k).\]

Note that $S_1+\ldots+S_{p_1}=0$. This implies that $\sigma_{p_1}=0$. Furthermore,
\[ (S_{k+1-p_1+1}+S_{k+1-p_1+2}+\ldots+S_{k+1})-(S_{k-p_1+1}+\ldots
+S_k)=S_{k+1}-S_{k+1-p_1}.\]
If $p_1<k+1<p_2$ then either $p_1|k+1$ and then $S_{k+1}=S_{k+1-p_1}=p_1-1$ or $p_1\nmid k+1$
and then $S_{k+1}=S_{k+1-p_1}=-1$. In any case, we conclude that for $p_1\leq k<p_2$
we have 
\[ S_{k-p_1+1}+\ldots+S_k=0.\]
Thus, we have $\sigma_{p_1}=0$ and, for $p_1<k<p_2$,
\[k\sigma_{k}=-(\sigma_{k-1}S_1+\sigma_{k-2}S_2+\ldots+\sigma_{p_1} S_{k-p_1}).\]
The result follows now by straightforward induction.
\end{proof}

Let $r$ be defined by $p_1<p_2<p_3<\ldots<p_r<p_1+p_2<p_{r+1}$. Let $p_2\leq k<p_1+p_2$.
Then $k-p_2<p_1$ so $S_i=-1$ for $1\leq i\leq k-p_2$. Also, $\sigma_i=0$ for $p_1\leq i<p_2$
and $\sigma_i=1$ for $1\leq i<p_1$. It follows that
\[p_2\sigma_{p_2}=-(S_{p_2-p_1+1}+\ldots+S_{p_2})\]
and
\[k\sigma_{k}=(\sigma_{k-1}+\sigma_{k-2}+\ldots+\sigma_{p_2})-(S_{k-p_1+1}+\ldots
+S_k)\]
for $p_1+p_2>k>p_2$.
Among the consecutive integers $p_2-p_1+1,\ldots, p_2$ exactly one is divisible by $p_1$, exactly one is divisible
by $p_2$ and none of the remaining numbers is divisible by any $p_i$.  It follows that
$S_{p_2-p_1+1}+\ldots+S_{p_2}=p_2-1+p_1-1+(p_1-2)(-1)=p_2$ and $\sigma_{p_2}=-1$.
If $p_1+p_2>k+1>p_2$, then subtracting the equalities
\[k\sigma_{k}=(\sigma_{k-1}+\sigma_{k-2}+\ldots+\sigma_{p_2})-(S_{k-p_1+1}+\ldots
+S_k)\]
and 
\[(k+1)\sigma_{k+1}=(\sigma_{k}+\sigma_{k-1}+\ldots+\sigma_{p_2})-(S_{k+1-p_1+1}+\ldots
+S_{k+1})\]
we get
\[(k+1)(\sigma_{k+1}-\sigma_k)=S_{k+1-p_1}-S_{k+1}.\]
If none of the $p_i$ divides $k+1$, then it also does not divide $k+1-p_1$ so $S_{k+1-p_1}=S_{k+1}=-1$. It follows that $\sigma_{k+1}=\sigma_{k}$ in this case.
If $p_1|k+1$ then $p_1|p+1-p_1$ so $S_{k+1-p_1}=S_{k+1}=p_1-1$. Again, $\sigma_{k+1}=\sigma_{k}$ in this case. Finally, if $p_i$ divides $k+1$ then $k+1=p_i$, $S_{k+1-p_1}=-1$, $S_{k+1}=p_i-1$
and $\sigma_{k+1}=\sigma_{k}-1$. We get the following result:

\begin{theorem}
For $p_i\leq k< p_{i+1}$ we have $\sigma_k=-k+1$, $i=2,\ldots, r-1$ and for $p_r\leq k<p_1+p_2$
we have $\sigma_k=-r+1$.
\end{theorem}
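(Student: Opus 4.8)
The plan is to note that essentially all the computational work is already in place, so that the statement follows by a short induction on $k$ over the interval $[p_2,\,p_1+p_2)$. The two inputs are: the base value $\sigma_{p_2}=-1$ computed above, and the difference relation
\[(k+1)(\sigma_{k+1}-\sigma_k)=S_{k+1-p_1}-S_{k+1}\]
valid for $p_2<k+1<p_1+p_2$, obtained by subtracting consecutive instances of the Newton recursion. Thus the only remaining task is to read off how $\sigma_k$ changes step by step, and this is controlled entirely by the right-hand side $S_{k+1-p_1}-S_{k+1}$.

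First I would determine exactly when $S_{k+1-p_1}-S_{k+1}\ne 0$ on this range. Since $p_1<p_2$ we have $p_1+p_2<2p_2$, so if some $p_j$ with $j\ge 2$ divides an integer lying in $[p_2,\,p_1+p_2)$, that integer must equal $p_j$; and as $p_2<k+1<p_1+p_2<p_{r+1}$, the only such values of $k+1$ are $p_3,\ldots,p_r$. Likewise $0<k+1-p_1<p_2$, so $k+1-p_1$ is divisible by no $p_j$ with $j\ge 2$, and by $p_1$ exactly when $k+1$ is. This produces the three cases already recorded above: if no $p_j$ divides $k+1$ then $S_{k+1-p_1}=S_{k+1}=-1$; if $p_1\mid k+1$ then $S_{k+1-p_1}=S_{k+1}=p_1-1$; and if $k+1=p_j$ with $3\le j\le r$ then $S_{k+1-p_1}=-1$ while $S_{k+1}=p_j-1$, so that $(k+1)(\sigma_{k+1}-\sigma_k)=-(k+1)$. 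Hence $\sigma_{k+1}=\sigma_k$ unless $k+1\in\{p_3,\ldots,p_r\}$, in which case $\sigma_{k+1}=\sigma_k-1$.

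The induction then finishes the proof: starting from $\sigma_{p_2}=-1$ and decrementing by exactly $1$ as $k$ crosses each of $p_3,\ldots,p_r$, we obtain $\sigma_k=-(i-1)$ for $p_i\le k<p_{i+1}$ when $2\le i\le r-1$, and $\sigma_k=-(r-1)$ for $p_r\le k<p_1+p_2$, which is the assertion (the displayed exponent being $-i+1$, in agreement with $\sigma_{p_2}=-1$). The only genuinely substantive point in the whole argument is the case analysis of $S_{k+1-p_1}-S_{k+1}$, and that rests entirely on the two-sided bound $p_2\le k+1<p_1+p_2$ forcing the divisibility patterns above; with that bound available from the definition of $r$, everything else is routine bookkeeping with the Newton identities.
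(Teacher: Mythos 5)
Your proof is correct and takes essentially the same route as the paper: it combines the base value $\sigma_{p_2}=-1$ with the difference relation $(k+1)(\sigma_{k+1}-\sigma_k)=S_{k+1-p_1}-S_{k+1}$ and the same three-case analysis of the right-hand side on $p_2<k+1<p_1+p_2$, then concludes by induction. You also correctly read the statement's ``$\sigma_k=-k+1$'' as a typo for $\sigma_k=-i+1$, which is exactly what the argument (and the paper's subsequent Theorem \ref{main}) requires.
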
 

Since $\Phi_{2n}(x)=\Phi_n(-x)$ for $n$ odd we get the following

\begin{theorem}\label{main}
If $n=p_1\ldots p_k$ where $p_i$ are odd primes and
$p_1<p_2<\ldots<p_r<p_1+p_2<p_{r+1}<\ldots<p_t$ with $t\geq 3$ odd,
then the numbers $-(r-2),-(r-3),\ldots, r-2, r-1$ are all coefficients
of $\Phi_{2n}$. Furthermore, if $1+p_r<p_1+p_2$ then $1-r$ is also a coefficient of $\Phi_{2n}$.
\end{theorem}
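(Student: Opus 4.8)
The plan is to derive Theorem~\ref{main} directly from the preceding theorem by pushing the coefficient information for $\Phi_n$ through the substitution $x\mapsto -x$. Since $n=p_1\cdots p_t$ is odd and $n>1$, we have $\Phi_{2n}(x)=\Phi_n(-x)$; writing $\Phi_n(x)=\sum_{k=0}^{\phi(n)}\sigma_k x^{\phi(n)-k}$ this becomes $\Phi_{2n}(x)=\sum_k (-1)^{\phi(n)-k}\sigma_k x^{\phi(n)-k}$. Because each $p_i$ is an odd prime, $\phi(n)=\prod_i(p_i-1)$ is even, so $(-1)^{\phi(n)-k}=(-1)^k$, and the coefficient of $\Phi_{2n}$ at $x^{\phi(n)-k}$ is exactly $(-1)^k\sigma_k$. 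Thus it is enough to show that $(-1)^k\sigma_k$, as $k$ runs over $0\le k<p_1+p_2$, attains every value in $\{-(r-2),-(r-3),\dots,r-2,r-1\}$, and also attains $1-r$ when $1+p_r<p_1+p_2$.

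For this I would feed in the values of $\sigma_k$ established above: $\sigma_k=0$ for $p_1\le k<p_2$; $\sigma_k=-(i-1)$ for $p_i\le k<p_{i+1}$ with $2\le i\le r-1$; and $\sigma_k=-(r-1)$ for $p_r\le k<p_1+p_2$. The one extra observation needed is purely arithmetic: two distinct odd primes differ by at least $2$, so each interval $[p_i,p_{i+1})$ contains the two consecutive integers $p_i$ (odd) and $p_i+1$ (even). Hence on $[p_i,p_{i+1})$ with $2\le i\le r-1$, the quantity $(-1)^k\sigma_k=(-1)^k\bigl(-(i-1)\bigr)$ takes both of the values $i-1$ and $-(i-1)$; letting $i-1$ range over $1,\dots,r-2$ shows that $\pm1,\pm2,\dots,\pm(r-2)$ all occur as coefficients of $\Phi_{2n}$. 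The interval $[p_1,p_2)$, being nonempty, contributes the coefficient $0$; and the odd index $k=p_r$, which lies in $[p_r,p_1+p_2)$, contributes $(-1)^{p_r}\bigl(-(r-1)\bigr)=r-1$. Together these give all of $-(r-2),-(r-3),\dots,r-2,r-1$.

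For the supplementary claim, observe that $[p_r,p_1+p_2)$ contains $p_1+p_2-p_r$ integers, and this number is at least $2$ precisely when $1+p_r<p_1+p_2$; in that case the interval also contains an even value of $k$, for which $(-1)^k\sigma_k=-(r-1)=1-r$. I do not expect a genuine obstacle in any of this: once the previous theorem is granted, the argument is bookkeeping of the sign $(-1)^k$ over the intervals on which $\sigma_k$ is constant. The only delicate point — and the reason the extra hypothesis appears — is that each interval used must contain both parities of $k$; this holds automatically for the intervals $[p_i,p_{i+1})$ with $i\le r-1$, but for the terminal interval $[p_r,p_1+p_2)$ it can fail (exactly when $p_r=p_1+p_2-1$), so the value $1-r$ is only guaranteed under the stated condition.
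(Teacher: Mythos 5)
Your proposal is correct and follows essentially the same route as the paper: the paper deduces Theorem~\ref{main} from the preceding theorem on the $\sigma_k$ via the identity $\Phi_{2n}(x)=\Phi_n(-x)$, leaving the sign bookkeeping implicit. You simply make explicit the parity argument (evenness of $\phi(n)$, both parities of $k$ in each interval $[p_i,p_{i+1})$, and the role of the hypothesis $1+p_r<p_1+p_2$ for the terminal interval), which matches the intended proof.
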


\section{Conjectural symmetry in the distribution of nontrivial coefficients}
In this section we share some computer generated evidence that suggests some sort of symmetry between
the appearance of positive and negative coefficients of cyclotomic polynomials. We attempt to formalize these observations in Conjectures \ref{conj1} and \ref{conj2}.


\subsection{Asymptotic symmetry of first appearances of nontrivial coefficients}\label{first}
{\sffamily{Let $A$ be the set of points $(c,n)$ such that $c$ is a nontrivial coefficient of $\Phi_n$, and $c$ is not a coefficient of any $\Phi_m$ for any $m<n$.}} We enumerate (by consecutive positive integers) the points of $A$ as follows: the point with the smaller value of $n$ is enumerated before the point with the larger value of $n$; if two points of $A$ have the same value of $n$, then the point with the smaller value of $c$ is enumerated first. {\sffamily{Let $A_k$ be a subset of $A$ consisting of the first $k$ points of $A$.}} Brett Haines obtained the following graphs with the help of Wolfram Mathematica \cite{wolf}. 

\begin{enumerate}
	\item Graph of $N_{100}$.
	\begin{center}
		\includegraphics[scale=.3]{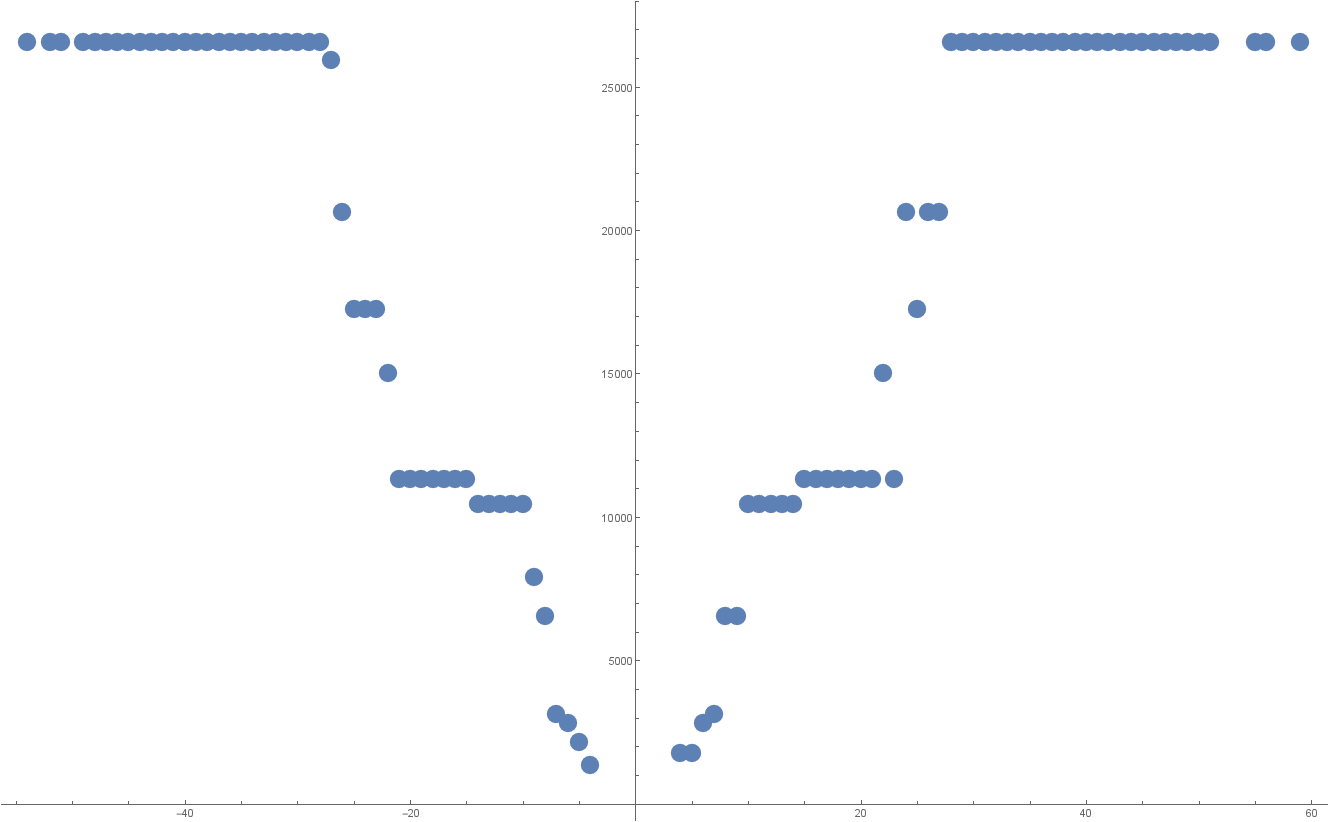}
	\end{center}
	\item Graph of $A_{250}$.
	\begin{center}
		\includegraphics[scale=.3]{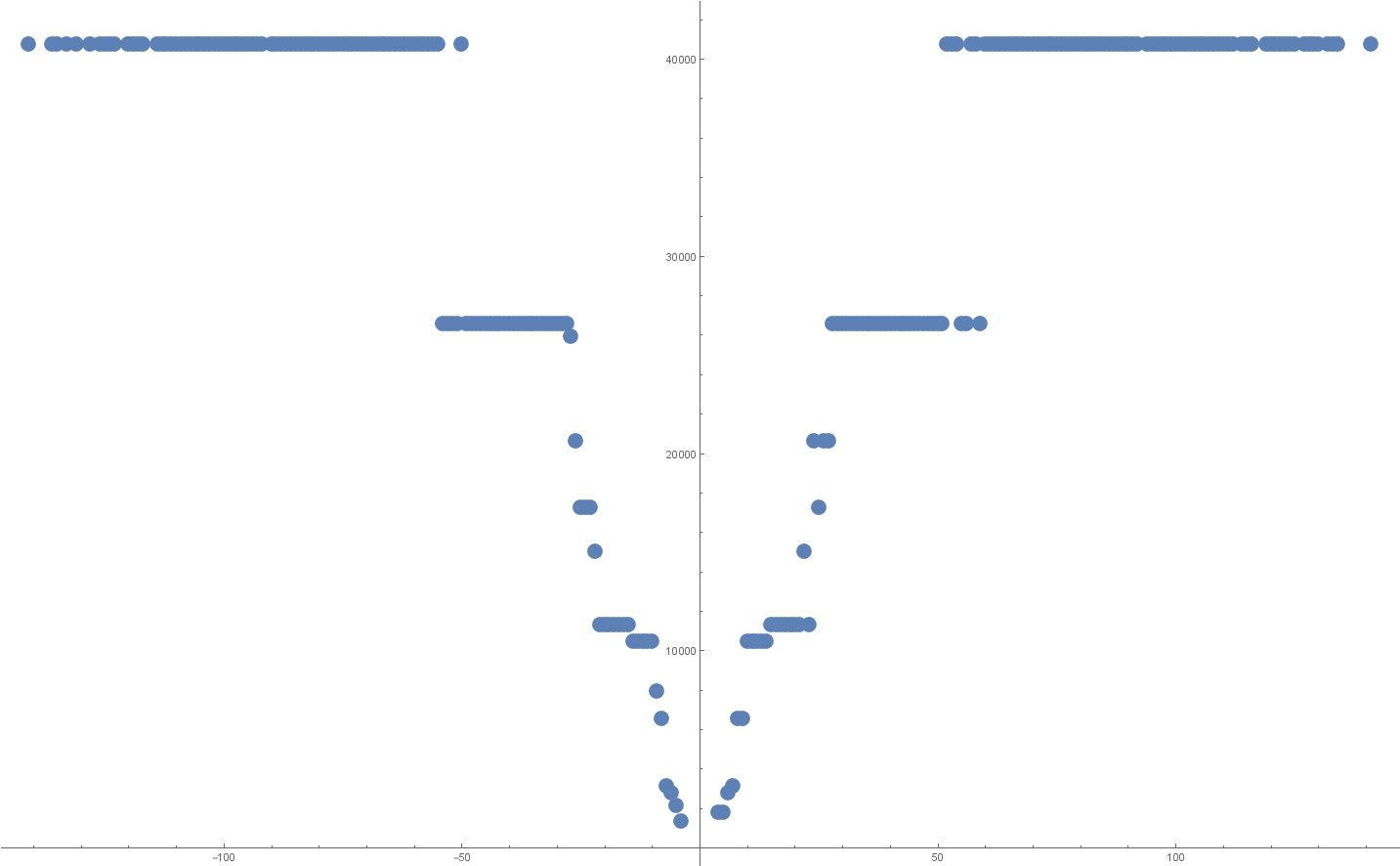}
	\end{center}
	\item Graph of $A_{1000}$.
	\begin{center}
		\includegraphics[scale=.3]{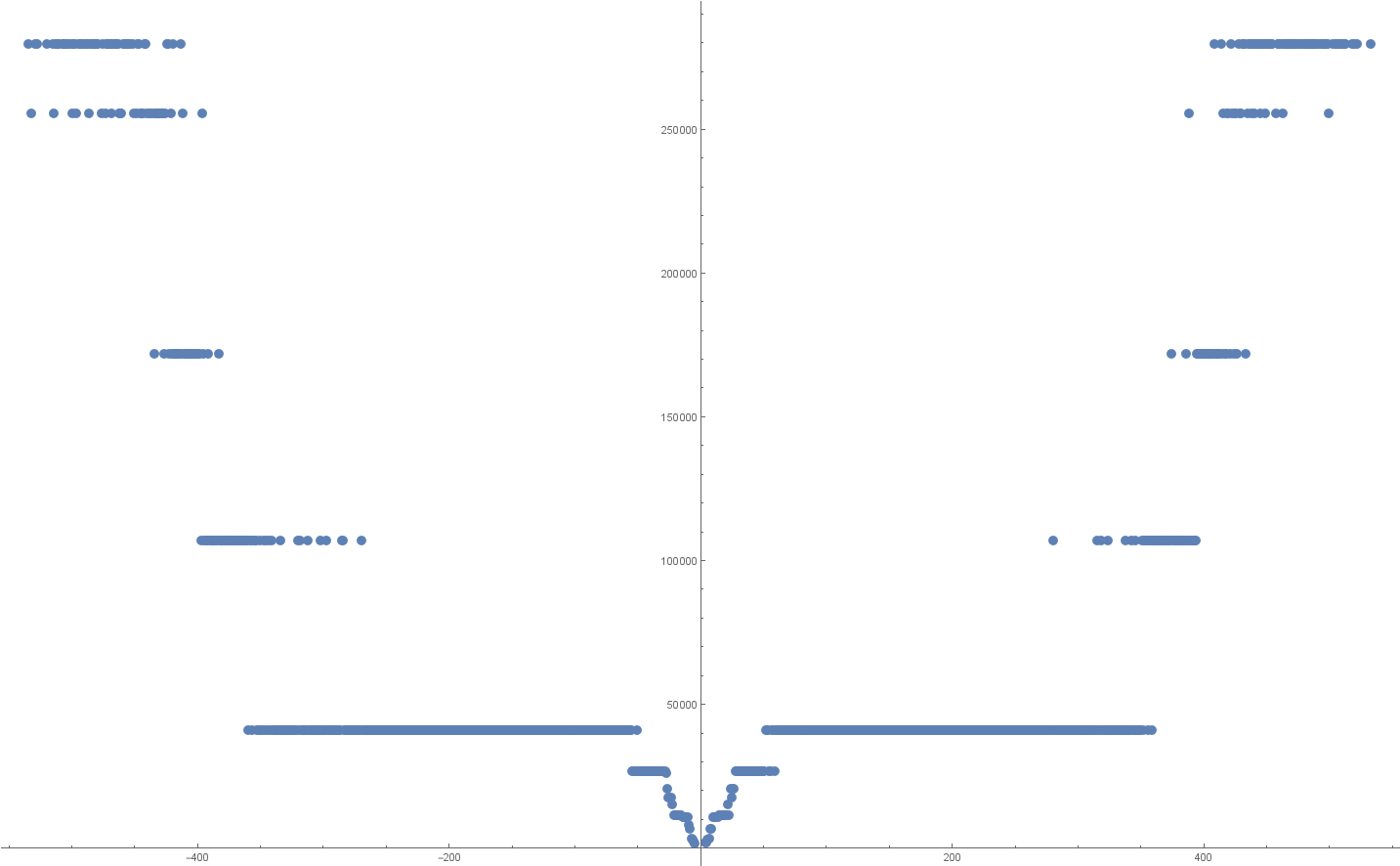}
	\end{center}
	\item Graph of $A_{10000}$.
	\begin{center}
		\includegraphics[scale=.3]{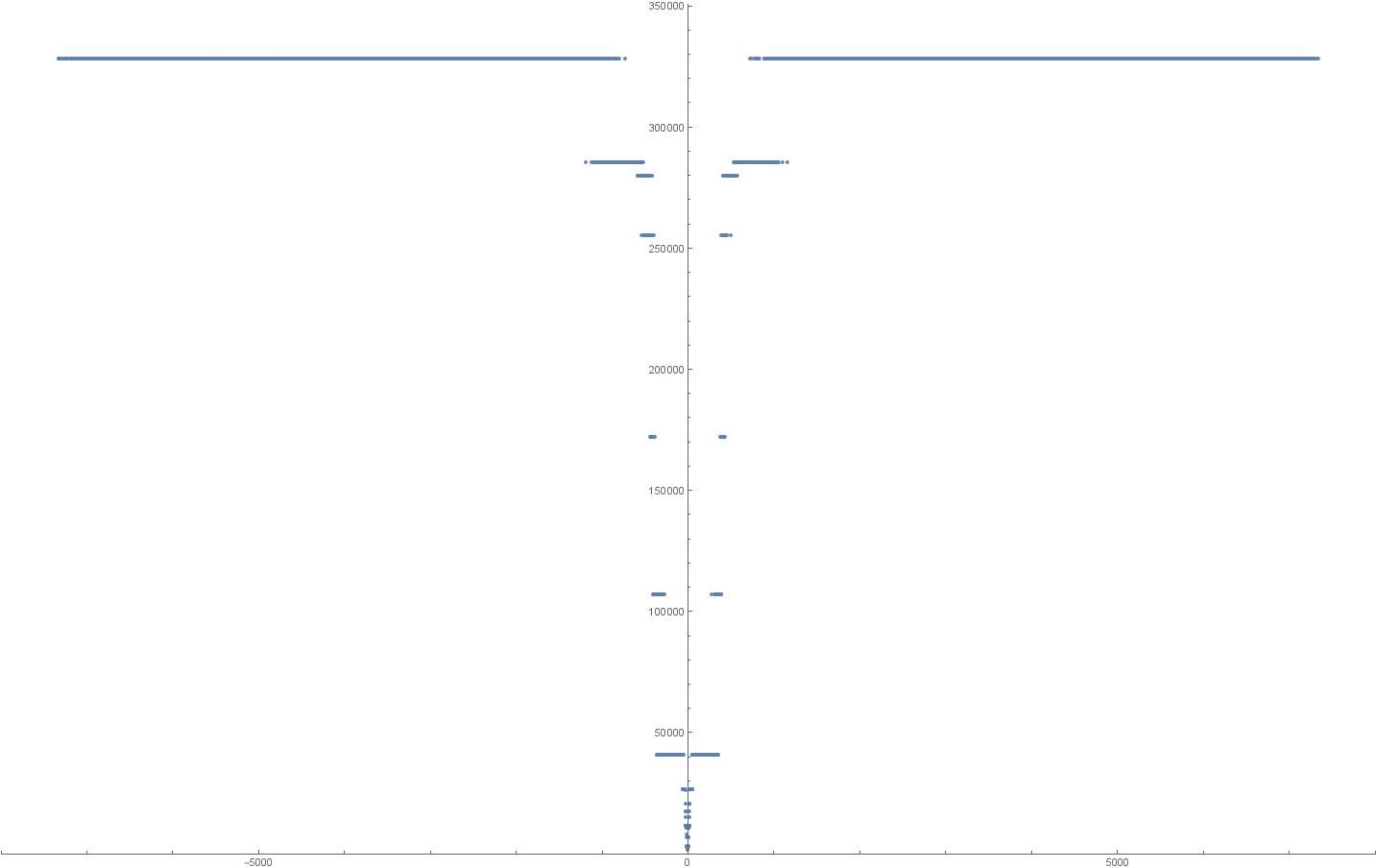}
	\end{center}
\end{enumerate}

\subsection{Asymptotic symmetry of nontrivial coefficients}\label{total} 
{\sffamily{Let
$B$ be the set of points $(c,n)$ such that that $c$ is a nontrivial coefficient of $\Phi_n$. Let $B_k$ consist of all points $(c,n)$
in $B$ such that $n \le k$.}} William Tyler Reynolds obtained the following graphs with the help of SAGE \cite{sage}.

\begin{enumerate}
	\item Graph of $B_{1000}$.
	$$\includegraphics[scale=.4]{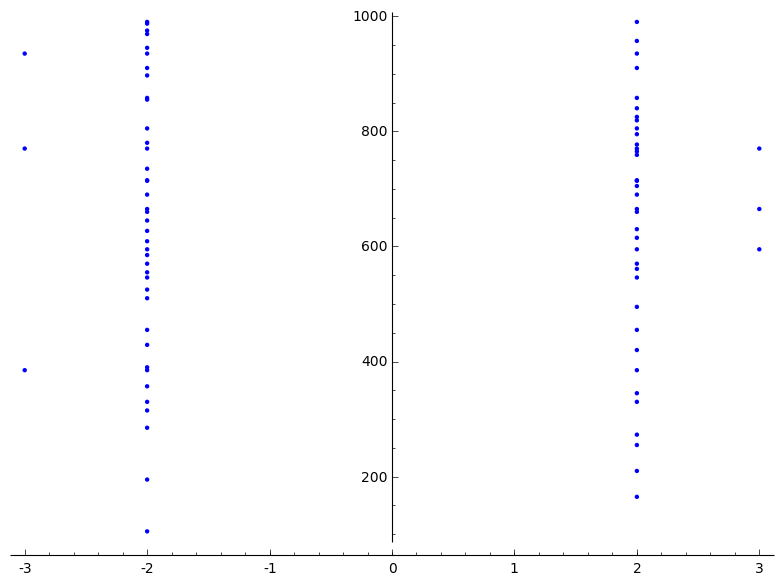}$$
	\item Graph of $B_{10000}$.
	$$\includegraphics[scale=.4]{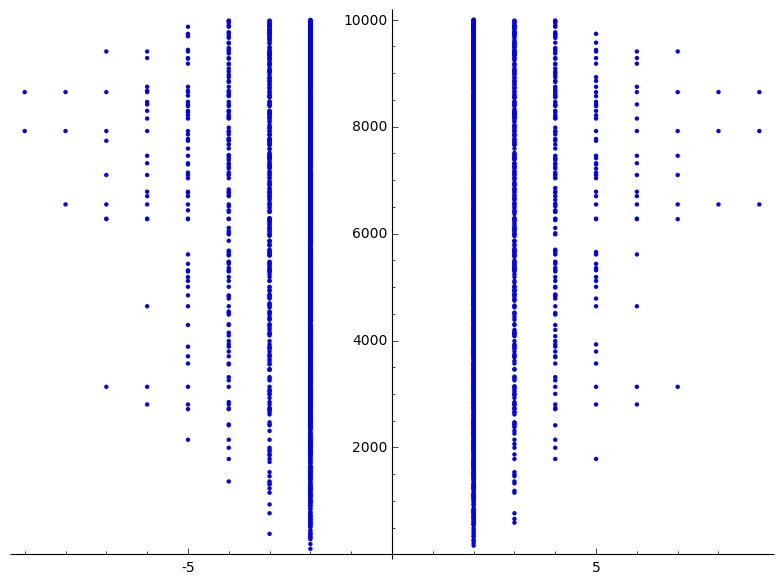}$$
	\item Graph of $B_{250000}$.
	$$\includegraphics[scale=.4]{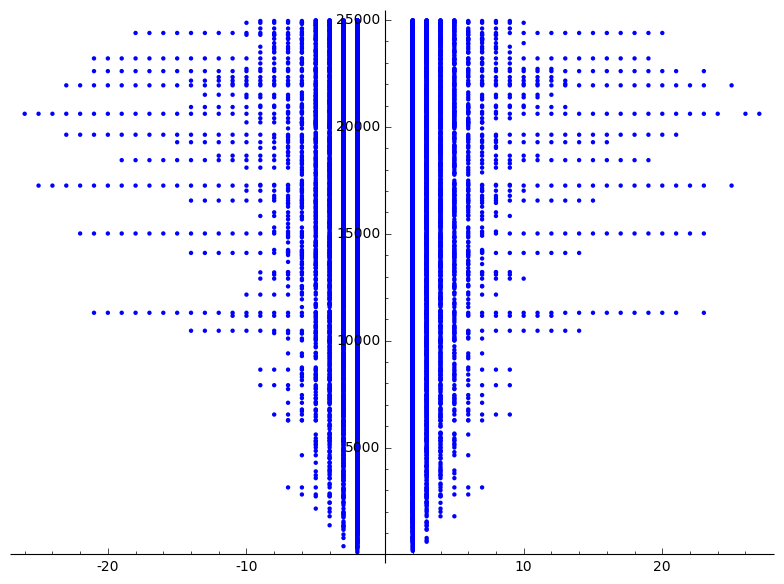}$$
	\item Graph of $B_{500000}$.
	$$\includegraphics[scale=.4]{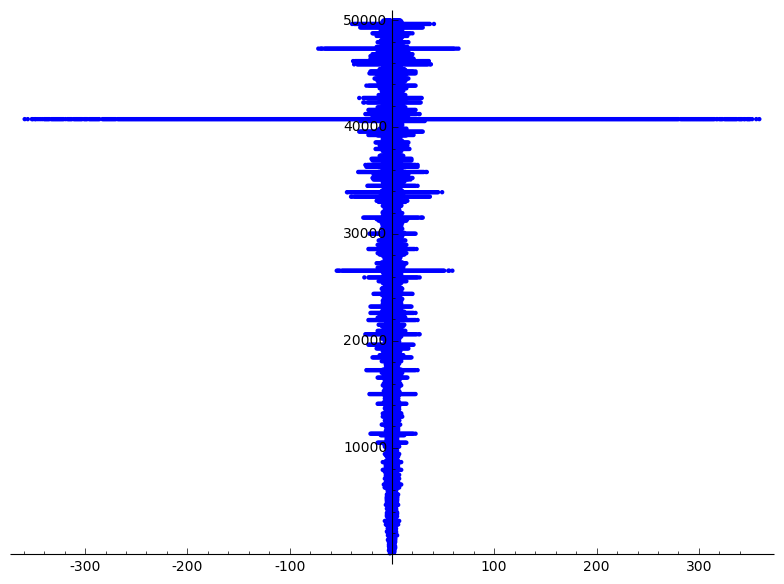}$$
\end{enumerate}

\subsection{Conjectures}\label{conj}
The eight pictures displayed in \ref{first} and \ref{total} above are the graphs of some four ascending finite subsets of each of the sets $A$ and $B$.
Let $A^+_k$ be the subset of $A_k$ consisting of those points $(c,n)$ with $c>0$;  
let $A^-_k$ be the subset of $A^-_k$ consisting of those points $(c,n)$ with $c<0$. The sets $B^+_k$ and $B^-_k$ are defined in a similar way. 
{\sffamily{We want to formalize the observation that the graphs of the sets $A^+_k$ and $A^-_k$ on a fixed computer screen appear increasingly more symmetric across the vertical $n$-axis as $k \to \infty$.}} 

If $S$ is a finite subset of the $c,n$-plane that lies in the upper half plane, then by ${S}^{\sharp}$ we denote the reflection of $S$ across the $n$-axis. If $c',n' > 0$, then we define $[c',n']S$ as the set $\{(c/c',n/n'): (c,n) \in S\}$, which is a subset of the unit square $[0,1]^2$.
Let $\mathcal H$ be the Hausdorff distance in $[0,1]^2$ induced by the standard Euclidean distance, and let $|S|$ denote the number of points in $S$. 

\begin{conjecture}\label{conj1}
	For any positive integer $k$, let $[-c_k,c_k] \times [0,n_k]$ be the smallest rectangle containing $A_k$. Then $A^+_k$ contains a subset $L_k$, and $A^-_k$ contains a subset $M_k$, such that 
	$\d \lim_{k \to \infty} \mathcal H \left([c_k,n_k]L_k,{[c_k,n_k]M_k}^{\sharp} \right) = 0 $ and $\d \lim_{k \to \infty} |A^+_k| / | L_k|  = \lim_{k \to \infty} |A^-_k| / | M_k| = 1$.
\end{conjecture}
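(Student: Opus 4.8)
This is stated as a conjecture, and what follows is a proposal for a line of attack rather than an argument we can complete; essentially all of the difficulty is concentrated in one arithmetic input described below. The plan is to reduce the geometric statement to information about the \emph{first-appearance function} $\nu(c):=\min\{n:c\text{ is a coefficient of }\Phi_n\}$ together with a description of which moduli slightly above $\nu(c)$ also realize $c$. First I would show that, after the normalization $S\mapsto[c_k,n_k]S$, the point cloud coming from $A_k$ accumulates on a fixed closed set $\Gamma\subseteq[0,1]^2$ (conjecturally bounded by graphs of monotone functions), and that the normalized clouds from $A^+_k$ and from the reflected set $(A^-_k)^{\sharp}$ each converge to $\Gamma$ in the sense that, for every $\varepsilon>0$, all but an $o(|A_k|)$ fraction of their points lie within distance $\varepsilon$ of $\Gamma$ once $k$ is large. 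The points one is forced to discard are exactly what the subsets $L_k\subseteq A^+_k$ and $M_k\subseteq A^-_k$ are permitted to omit, and both displayed limits then follow. This reduces the problem to a quantitative sign symmetry $\nu(-c)=(1+o(1))\,\nu(c)$ as $|c|\to\infty$, together with a matching statement for the proportion of moduli below $(1+o(1))\nu(c)$ that realize $c$ versus $-c$.

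The symmetry $\nu(-c)\sim\nu(c)$ splits into an accessible upper-bound half and a hard lower-bound half. For the upper bound one generalizes the construction behind Theorem~\ref{main}, and Suzuki's original argument, to produce for each target $c$ an explicit squarefree $n=p_1\cdots p_t$ of tightly controlled size with $c$ among the coefficients of $\Phi_{2n}$; because $\Phi_{2n}(x)=\Phi_n(-x)$ negates every odd-degree coefficient while fixing every even-degree one, a single family of prime configurations controls $\nu(c)$ and $\nu(-c)$ at the same time up to a bounded factor, and the remaining task is to optimize the prime gaps --- guided by the explicit formulas for $\sigma_k$ in the range $k<p_1p_2$ established above --- so that this factor becomes $1+o(1)$. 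The lower-bound half requires the reverse inequality: that $c$ cannot occur as a coefficient of $\Phi_n$ for $n$ much smaller than the construction allows. This is the main obstacle, and it is a genuine gap in present knowledge: no nontrivial lower bound for $\nu(c)$ is known, so one cannot currently pin down the limiting shape $\Gamma$ at all, let alone prove that the two normalized half-clouds converge to it.

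A further subtlety, and the reason the conjecture asks only for convergence of large subsets, is that Hausdorff distance is governed by the most isolated points rather than by the bulk. A realistic proof would therefore proceed in two stages: first establish bulk equidistribution of the normalized clouds of $A^+_k$ and $(A^-_k)^{\sharp}$ toward $\Gamma$ (a statement insensitive to sparse outliers), and then show that the number of $\varepsilon$-isolated points that must be removed to make the Hausdorff distance small is $o(|A_k|)$. Finally, I would expect the analogous statement for the set $B$ of \emph{all} coefficient occurrences (Conjecture~\ref{conj2}) to be more approachable, since there minimality plays no role, and the involution coming from $\Phi_{2n}(x)=\Phi_n(-x)$ combined with equidistribution-type counting for coefficients of $\Phi_n$ with $n$ a product of three or more primes has a better chance of yielding the symmetry directly; progress there would be good evidence for, and a natural stepping stone toward, Conjecture~\ref{conj1}.
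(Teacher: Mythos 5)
The statement you are addressing is Conjecture~\ref{conj1}: the paper does not prove it, and explicitly says it has no mathematical framework to explain the observed symmetry --- the only support offered is the computational evidence plotted in Section~3. So there is no proof of record to compare yours against, and your submission, by its own account, is a research program rather than a proof; it therefore leaves the statement exactly as open as the paper does. The gaps are genuine and are essentially the ones you name yourself: (i) the existence of a limiting set $\Gamma$ for the normalized clouds is itself an unproved (and stronger) hypothesis, not a reduction; (ii) the sign symmetry $\nu(-c)=(1+o(1))\,\nu(c)$ is far beyond current knowledge --- its lower-bound half would require nontrivial lower bounds on the first appearance $\nu(c)$, of which none are known, and even the upper-bound half is delicate, since the identity $\Phi_{2n}(x)=\Phi_n(-x)$ only negates odd-degree coefficients, so it does not transfer an occurrence of $c$ to an occurrence of $-c$ without controlling the parity of the degree at which $c$ occurs, and in any case a comparison ``up to a bounded factor'' is useless after the normalization by $(c_k,n_k)$: one needs ratio $1+o(1)$; (iii) the passage from bulk equidistribution to small Hausdorff distance after deleting $o(|A_k|)$ points is asserted, not argued. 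None of these steps is carried out, so the proposal cannot be counted as a proof, correct or otherwise.

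Two smaller remarks on the framing. Since each nontrivial $c$ contributes exactly one point $(c,\nu(c))$ to $A$ (Suzuki's theorem guarantees $\nu(c)$ exists for every integer), the set $A$ is precisely the graph of $\nu$, so Conjecture~\ref{conj1} is purely a statement about $\nu$ and its counting function split by sign; your auxiliary requirement about ``which moduli slightly above $\nu(c)$ also realize $c$'' is irrelevant here and belongs to the setting of Conjecture~\ref{conj2}. Also note that the density conditions $|A^+_k|/|L_k|\to 1$ and $|A^-_k|/|M_k|\to 1$ require understanding how many coefficient values first appear by modulus $n$, with signs, which is yet another quantity about which nothing is currently proved; Theorem~\ref{main} gives constructions (hence upper bounds on $\nu$ for the values $-(r-2),\ldots,r-1$) but says nothing about minimality, so it cannot by itself anchor the shape of the normalized cloud. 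As a direction of attack your outline is reasonable and correctly locates the hard arithmetic input, but the conjecture remains open.
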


\begin{conjecture}\label{conj2}
	We make a similar conjecture for the family $\{B_k\}$.
\end{conjecture}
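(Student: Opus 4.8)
The plan is to read the coefficients of $\Phi_{2n}$ off those of $\Phi_n$ via the substitution $x\mapsto -x$, and then invoke the preceding theorem, which already computes $\sigma_k=\sigma_k(n)$ for all $k$ with $0\le k<p_1+p_2$.

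First I would record the elementary sign bookkeeping. Since $n=p_1\cdots p_t$ with $t\ge 3$ odd primes we have $n>2$, hence $\phi(n)$ is even. Writing $\Phi_n(x)=\sum_{k=0}^{\phi(n)}\sigma_k x^{\phi(n)-k}$ and using $\Phi_{2n}(x)=\Phi_n(-x)$ for odd $n$, we get
\[\Phi_{2n}(x)=\sum_{k=0}^{\phi(n)}(-1)^{\phi(n)-k}\sigma_k\,x^{\phi(n)-k}=\sum_{k=0}^{\phi(n)}(-1)^k\sigma_k\,x^{\phi(n)-k}.\]
So every number of the form $(-1)^k\sigma_k$ with $0\le k<p_1+p_2$ is a coefficient of $\Phi_{2n}$, and it remains only to verify that, as $k$ ranges over this interval, the numbers $(-1)^k\sigma_k$ already include $-(r-2),-(r-3),\ldots,r-1$, together with $1-r$ when $1+p_r<p_1+p_2$.

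Next I would substitute the values supplied by the preceding theorem: $\sigma_0=1$; $\sigma_k=1$ for $1\le k<p_1$; $\sigma_k=0$ for $p_1\le k<p_2$; $\sigma_k=-i+1$ for $p_i\le k<p_{i+1}$ with $2\le i\le r-1$; and $\sigma_k=-r+1$ for $p_r\le k<p_1+p_2$. The key observation is that, since consecutive odd primes differ by at least $2$, each interval $[p_i,p_{i+1})$ with $2\le i\le r-1$ contains the two consecutive integers $p_i$ and $p_i+1$, one of each parity; hence $(-1)^k\sigma_k$ attains both $i-1$ and $-(i-1)$ on this interval. Letting $i$ run from $2$ to $r-1$, and also using $(-1)^0\sigma_0=1$ and $\sigma_k=0$ on the nonempty interval $[p_1,p_2)$, we obtain every value in $\{-(r-2),\ldots,-1,0,1,\ldots,r-2\}$. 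For the remaining value $r-1$: $p_r$ is odd and $\sigma_{p_r}=-r+1$, so $(-1)^{p_r}\sigma_{p_r}=r-1$. Finally, if $1+p_r<p_1+p_2$ then $k=p_r+1$ still lies in $[p_r,p_1+p_2)$, it is even, and $(-1)^{p_r+1}\sigma_{p_r+1}=-(r-1)=1-r$. This completes the deduction.

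There is no genuine obstacle here: all the substance lies in the preceding computation of the $\sigma_k$, and what remains is the parity argument above. The only points needing a little care are (i) that $\phi(n)$ is even, so that the sign flips collapse to $(-1)^k$ rather than $(-1)^{\phi(n)-k}$; (ii) the remark that each relevant interval $[p_i,p_{i+1})$ contains integers of both parities, which is exactly what allows the minus sign in $\sigma_k$ to be both cancelled and preserved; and (iii) the edge behaviour at $k=p_r$ (always producing $r-1$) and $k=p_r+1$ (producing $1-r$ under the extra hypothesis). One may note in passing that the hypotheses force $r\ge 2$, since $p_2<p_1+p_2$, so the asserted list of coefficients is always non-empty.
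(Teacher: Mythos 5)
Your argument does not address the statement at hand. Conjecture \ref{conj2} is not the theorem about coefficients of $\Phi_{2n}$; it is the analogue, for the family $\{B_k\}$, of Conjecture \ref{conj1}: it asserts that the sets $B^+_k$ and $B^-_k$ of positive and negative nontrivial coefficient points $(c,n)$ with $n\le k$ contain subsets $L_k$ and $M_k$ whose rescaled copies become asymptotically mirror images across the $n$-axis in the Hausdorff metric, with $|B^+_k|/|L_k|\to 1$ and $|B^-_k|/|M_k|\to 1$. This is an open statement: the paper offers only computational evidence (the graphs of $B_{1000}$ through $B_{500000}$) and explicitly says there is at present no mathematical framework explaining the observed symmetry. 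Nothing in your write-up touches the sets $B_k$, the rescaling $[c',n']S$, the reflection $S^{\sharp}$, the Hausdorff distance, or the density conditions, so there is no proof of Conjecture \ref{conj2} here, and indeed none is known.

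What you have written is instead a deduction of Theorem \ref{main} from the preceding computation of the $\sigma_k$ on $[0,p_1+p_2)$, via $\Phi_{2n}(x)=\Phi_n(-x)$, the evenness of $\phi(n)$, and the parity of $k$ on each interval $[p_i,p_{i+1})$ and at $k=p_r$, $p_r+1$. That argument is sound and is essentially how the paper passes from its Theorem on $\sigma_k$ to Theorem \ref{main} (the paper compresses this step into one line, so your explicit parity bookkeeping is a reasonable elaboration). But establishing which integers occur as coefficients of a single $\Phi_{2n}$ says nothing about the asymptotic, quantitative symmetry of the full distribution of positive versus negative coefficients across all $\Phi_n$ with $n\le k$, which is what Conjecture \ref{conj2} claims; the gap is not a fixable step but a mismatch of target statements.
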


We hope that we have formulated the weakest possible conjectures that support our symmetry claims. Stronger results may be true.


\begin{thebibliography}{100}
	
	
\bibitem{bateman} P.T.~Bateman, Note on the coefficients of the cyclotomic polynomial, Bull.~Amer.~Math. Soc.~ Volume 55, Number 12 (1949), 1180-1181.

\bibitem{erdos} P.~Erd\"{o}s, On the coefficients of the cyclotomic polynomial, Bull.~Amer.~Math.~Soc.~Volume 52, Number 2 (1946), 179-184.

\bibitem{gould} H.~W.~Gould, The Girard-Waring power sum formulas for symmetric functions and Fibonacci sequences, Fibonacci Quart.,37(1999),135-140.
	
\bibitem{lehmer} E.~Lehmer, On the magnitude of the coefficients of the cyclotomic polynomial, Bull.~Amer.~Math.~Soc.~ Volume 42, Number 6 (1936), 389-392.

	
\bibitem{mig} A.~Migotti, Zur Theorie der Kreisteilungsgleichung, S.-B der Math.-Naturwiss. Class der Kaiserlichen
Akademie der Wissenschaften, Wien, 1883, 87: 7-14.	

Migotti, A. ``Zur Theorie der Kreisteilungsgleichung.'' Sitzber. Math.-Naturwiss. Classe der Kaiser. Akad. der Wiss., Wien 87, 7-14, 1883. 

\bibitem{sage} SageMath - Open-Source Mathematical Software System. Available at http://www.sagemath.org/.

\bibitem{suzuki} J.~Suzuki, On coefficients of cyclotomic polynomials,  Proc.~Japan Acad.~Ser.~A Math.~Sci.~Volume 63, Number 7 (1987), 279-280.

\bibitem{wolf} Wolfram Mathematica: Modern Technical Computing. Available at http://www.wolfram.com/mathematica/.


\end{thebibliography}
\end{document}